\newtheorem{Theorem}{Theorem}[section]
\newtheorem{Lemma}[Theorem]{Lemma}
\newtheorem{Corollary}[Theorem]{Corollary}
\newtheorem{Question}[Theorem]{Question}
\def\qed{\ifhmode\textqed\fi
\ifmmode\ifinner\hfill\quad\qedsymbol\else\dispqed\fi\fi}
\def\textqed{\unskip\nobreak\penalty50
\hskip2em\hbox{}\nobreak\hfill\qedsymbol
\parfillskip=0pt \finalhyphendemerits=0}
\def\dispqed{\rlap{\qquad\qedsymbol}}
\def\supp{\textup{supp}}
\def\lcm{\textup{lcm}}
\def\lex{\textup{lex}}
\begin{document}
	
\title{A new proof\\ of the Herzog-Hibi-Zheng theorem}	
\author{Antonino Ficarra}

\address{Antonino Ficarra, Departamento de Matem\'{a}tica, Escola de Ci\^{e}ncias e Tecnologia, Centro de Investiga\c{c}\~{a}o, Matem\'{a}tica e Aplica\c{c}\~{o}es, Instituto de Investiga\c{c}\~{a}o e Forma\c{c}\~{a}o Avan\c{c}ada, Universidade de \'{E}vora, Rua Rom\~{a}o Ramalho, 59, P--7000--671 \'{E}vora, Portugal}
\email{antonino.ficarra@uevora.pt\,\,\,\,\,\,\,antficarra@unime.it}

\thanks{
}

\subjclass[2020]{Primary 13F20; Secondary 13F55, 05C70, 05E40.}

\keywords{Monomial ideals, Edge Ideals, Linear Powers.}

\begin{abstract}
	We give a new, elementary proof of the celebrated Herzog-Hibi-Zheng theorem on powers of quadratic monomial ideals.
\end{abstract}

\maketitle

\vspace*{-0.5cm}
\section{Introduction}

Let $S=K[x_1,\dots,x_n]$ be the standard graded polynomial ring over a field $K$, and let $I\subset S$ be a monomial ideal. One of the most fascinating problems in Combinatorial Commutative Algebra is to decide when $I$ has \textit{linear powers}. That is, $I$ is generated in a single degree $d$, and all powers $I^k$ have a $dk$-linear resolution. Naively, one would expect that if $I$ has linear resolution, the same should be true for all its powers $I^k$. Very early on, counterexamples to this expectation were discovered by Terai, and, independently, by Sturmfels. See the introduction of \cite{HHZ2004}.

Nonetheless, using the classical Dirac theorem on chordal graphs \cite{Dirac61}, and Fr\"oberg theorem on edge ideals \cite{Froberg88}, the following influential result was proved in \cite{HHZ2004}.

\begin{Theorem}\label{Thm:HHZ}
	\textup{(Herzog-Hibi-Zheng).} Let $I\subset S$ be a quadratic monomial ideal. The following conditions are equivalent.
	\begin{enumerate}
		\item[\textup{(a)}] $I$ has linear resolution.
		\item[\textup{(b)}] $I$ has linear powers.
		\item[\textup{(c)}] All powers of $I$ have linear quotients.
	\end{enumerate}
\end{Theorem}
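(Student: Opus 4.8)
The plan is to reduce to edge ideals, settle the formal implications, and then prove (a) $\Rightarrow$ (c) by induction. First, by polarization we may assume the squarefree case: for any monomial ideal $J$ the polarized ideal $J^{\textup{pol}}$ has the same graded Betti numbers as $J$, has linear quotients exactly when $J$ does, and has linear powers exactly when $J$ does, while the polarization of a quadratic monomial ideal is the edge ideal $I(G)$ of a finite simple graph $G$. So take $I=I(G)$. Since an equigenerated monomial ideal with linear quotients has a linear resolution, (c) $\Rightarrow$ (b); and (b) $\Rightarrow$ (a) by taking $k=1$. Hence everything rests on the implication (a) $\Rightarrow$ (c).

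Assume $I(G)$ has a linear resolution. By Fr\"oberg's theorem \cite{Froberg88} the complement $\overline G$ is chordal, and by Dirac's theorem \cite{Dirac61} $\overline G$ has a simplicial vertex $v$; equivalently, the closed neighbourhood $N_G[v]=\{v\}\cup N_G(v)$ is a vertex cover of $G$, so every edge of $G$ meets $N_G[v]$. Put $\mathfrak m=(x_w:w\in N_G(v))$ and $G'=G\setminus v$; then $\overline{G'}=\overline{G}\setminus v$ is again chordal. Splitting the minimal generators of $I(G)^k$ according to divisibility by $x_v$ yields the decomposition
\[
I(G)^k=x_v\,\mathfrak m\,I(G)^{k-1}+I(G')^k ,
\]
in which the minimal generators of $I(G)^k$ divisible by $x_v$ are precisely those of $x_v\mathfrak m\,I(G)^{k-1}$, and those not divisible by $x_v$ are precisely those of $I(G')^k$.

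Now induct on $|V(G)|+k$, the base cases being immediate. By the inductive hypothesis applied to $G'$, the ideal $I(G')^k$ has linear quotients, and applied to the exponent $k-1$, the ideal $I(G)^{k-1}$ has linear quotients; consequently $x_v\mathfrak m\,I(G)^{k-1}$ has linear quotients as well, since multiplying an equigenerated ideal with linear quotients first by a variable and then by a monomial prime again produces one with linear quotients. Order the minimal generators of $I(G)^k$ by listing those of $I(G')^k$ first in a linear-quotients order, and then those of $x_v\mathfrak m\,I(G)^{k-1}$ in a linear-quotients order. To conclude it suffices to verify: for every minimal generator $u$ of $I(G)^k$ with $x_v\mid u$ and every minimal generator $w$ preceding $u$, there is a variable $x_t$ dividing $w:u$ and a minimal generator $u'$ preceding $u$ with $u':u=(x_t)$.

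If $x_v\mid w$, this is immediate from the linear quotients of $x_v\mathfrak m\,I(G)^{k-1}$. The decisive case is $x_v\nmid w$: then $w=f_1\cdots f_k$ is a product of edges of $G'$, and each $f_i$ meets $N_G(v)$ because $N_G[v]$ is a vertex cover. Writing $u=x_v x_s\,(e_2\cdots e_k)$ with $\{v,s\}$ an edge in some factorization of $u$, one builds $u'$ by deleting a carefully chosen vertex of $u$ that lies in $N_G[v]$ and inserting a vertex appearing in $w$, arranged so that $u'$ is again a product of $k$ edges of $G$, that $u':u$ is a single variable dividing $w:u$, and that $u'$ precedes $u$ (typically $u'$ is not divisible by $x_v$, so it already occurs in the list). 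That such an exchange is always possible is exactly where chordality of $\overline G$ enters, through the exchange property encoded in a perfect elimination order of $\overline G$; this is the step I expect to be the main obstacle, the remainder being bookkeeping with minimal generators and the chosen order.
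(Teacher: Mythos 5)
Your overall plan (polarize, reduce to edge ideals, invoke Fr\"oberg and Dirac, split off a simplicial vertex, and induct) is in the same spirit as the paper's, but the two decisive steps of your argument are either left open or asserted without justification, and the paper uses a genuinely different mechanism for the hard part.

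First, the claim that ``multiplying an equigenerated ideal with linear quotients first by a variable and then by a monomial prime again produces one with linear quotients'' is not true in general, and you cannot invoke it as a known fact. Multiplication by a single variable $x_v$ does preserve linear quotients (the colon ideals are unchanged), but multiplication by a monomial prime $\mathfrak{m}$ can destroy linear quotients even when the ideal is contained in $\mathfrak{m}$: the paper itself records the Conca--Herzog example $I=(a^2b,abc,bcd,cd^2)$, $P=(b,c)$, where $I$ has linear quotients and $I\subset P$, yet $PI$ fails to have a linear resolution. The object you apply this to, $\mathfrak m\,I(G)^{k-1}$, involves $I(G)^{k-1}$, which is \emph{not} quadratic for $k>2$, so you are squarely in the regime where the counterexample lives. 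In the paper, the corresponding statement (Corollary \ref{Cor:PI}) is a \emph{consequence} of the full theorem and its specific linear-quotients order, not an ingredient.

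Second, and more seriously, the crucial case $x_v\nmid w$, $x_v\mid u$ is exactly where the theorem's content is, and your proposal leaves it as ``one builds $u'$ by deleting a carefully chosen vertex \dots\ this is the step I expect to be the main obstacle.'' You cannot reduce this to a simple vertex swap: when $w$ and $u$ are far apart, there is no single exchange that produces a generator $u'$ earlier in your concatenated order with $u':u$ a variable dividing $w:u$. The paper handles precisely this difficulty with a different device: it orders $\mathcal{G}(I^k)$ via lexicographic ``standard presentations'' $u=e_{i_1}\cdots e_{i_k}$ (the smallest preimage under $y_i\mapsto e_i$), and then, for two generators $u,v$ with $v<u$, encodes the relation $fu=gv$ (with $f=v:u$, $g=u:v$) as an even closed walk in an auxiliary graph by adjoining a dummy vertex $x_{n+1}$. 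The walk is then cut at the dummy-vertex visits, producing explicit shorter relations or explicit one-edge swaps, and the properties $(*)$ and $(**)$ from Corollary \ref{Cor:useful} (which you do not isolate) are exactly the combinatorial tools used to resolve the remaining subcases. Your decomposition $I(G)^k = x_v\mathfrak m\,I(G)^{k-1}+I(G')^k$ and the concatenated ordering are a reasonable first idea, but without the closed-walk machinery (or a substitute) the exchange argument is not just bookkeeping --- it is the proof, and it is missing.
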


We will explain any unexplained concept in the next section.

The original proof given in \cite{HHZ2004}, which shows only that (a) and (b) are equivalent, made use of the so-called $x$-condition, which is a condition on the Gr\"obner basis of the defining ideal of the Rees algebra of $I$, and the delicate computation of the Graver basis of certain edge rings. Only in \cite[Theorem 3.1 and Corollary 3.2]{HH2011} it was realized that the conditions (a)-(b) are further equivalent to (c).

In the last 20 years, Theorem \ref{Thm:HHZ} inspired a vast amount of researches going into various directions. One of the most remarkable ones, is the conjecture of Nevo and Peeva \cite{NP}, which is still open, that aims to characterize those quadratic monomial ideals $I\subset S$ such that $I^k$ has linear resolution for all $k\gg0$.

Besides of an alternative, but still rather tedious proof of Theorem \ref{Thm:HHZ}, which is due to Banerjee \cite[Theorem 6.16]{AB}, no simple and elementary proof of Theorem \ref{Thm:HHZ} has been known. The goal of this note is to provide a short, simple and yet new proof of Theorem \ref{Thm:HHZ}. Moreover, a novel consequence of Theorem \ref{Thm:HHZ} will be derived in Corollary \ref{Cor:PI}, together with some related questions.

\section{From Dirac to Fr\"oberg to Herzog-Hibi-Zheng}

Let $G$ be a finite simple graph on the vertex set $\{x_1,\dots,x_n\}$, with edge set $E(G)$. The \textit{edge ideal} of $G$ is the monomial ideal $I(G)\subset S$ generated by those monomials $x_ix_j$ such that $\{x_i,x_j\}\in E(G)$. A graph $G$ is said \textit{complete} if every 2-subset $\{x_i,x_j\}$ of $V(G)$ is an edge of $G$. The \textit{open neighbourhood} of $x_i\in V(G)$ is the set
$$
N_G(x_i)\ =\ \{x_j\in V(G):\ \{x_i,x_j\}\in E(G)\}.
$$

A graph $G$ is called \textit{chordal} if it has no induced cycles of length bigger than three. Recall that a \textit{perfect elimination order} of $G$ is an ordering $x_1>\dots>x_n$ of its vertex set $V(G)$ such that $N_{G_i}(x_i)$ induces a complete subgraph on $G_i$, where $G_i$ is the induced subgraph of $G$ on the vertex set $\{x_i,x_{i+1},\dots,x_n\}$.

\begin{Theorem}\label{Thm:Dirac}
	\textup{(Dirac).} A finite 
	 simple graph $G$ is chordal, if and only if, $G$ admits a perfect elimination order.
\end{Theorem}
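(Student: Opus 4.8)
The plan is to prove the two implications separately; the forward direction is a two‑line argument and the reverse one carries all the weight. For the ``only if'' part, assume $G$ has a perfect elimination order $x_1>\dots>x_n$ and, for contradiction, an induced cycle $C$ of length at least four. Let $x_i$ be the vertex of $C$ of smallest subscript and let $x_j,x_k$ be its two neighbours along $C$; then $j,k>i$, so $x_j,x_k\in N_{G_i}(x_i)$. Since $N_{G_i}(x_i)$ induces a complete subgraph, $\{x_j,x_k\}\in E(G)$, which contradicts the fact that the two cyclic neighbours of a vertex in an induced cycle of length $\geq 4$ are non‑adjacent. Hence $G$ is chordal.

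For the ``if'' part I would first isolate the crucial fact: \emph{every chordal graph on at least one vertex has a simplicial vertex}, that is, a vertex $v$ with $N_G(v)$ inducing a complete subgraph. Granting this, the conclusion follows by induction on $n=|V(G)|$: choose a simplicial vertex and name it $x_1$; the induced subgraph $H$ of $G$ on $V(G)\setminus\{x_1\}$ is again chordal (induced subgraphs of chordal graphs are chordal), so by induction $H$ has a perfect elimination order $x_2>\dots>x_n$. Then $x_1>x_2>\dots>x_n$ works for $G$: for $i\geq 2$ the required property is inherited from $H$, since $G_i$ is an induced subgraph of $H$; and for $i=1$ we have $G_1=G$ while $N_G(x_1)$ induces a complete subgraph by the choice of $x_1$.

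It remains to find a simplicial vertex in a chordal graph $G$, and this is where the actual work happens. I would prove the sharper statement that a chordal graph which is not complete has two \emph{non‑adjacent} simplicial vertices (in a complete graph every vertex is simplicial), arguing by induction on $|V(G)|$. If $G$ is complete, or disconnected, the statement is immediate --- in the disconnected case apply the inductive hypothesis to two distinct connected components. Otherwise pick non‑adjacent vertices $a,b$ and a \emph{minimal} set $S$ separating them; let $G_a$ and $G_b$ be the connected components of $G\setminus S$ containing $a$ and $b$, and let $A$ and $B$ be the induced subgraphs of $G$ on $V(G_a)\cup S$ and on $V(G_b)\cup S$. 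The argument rests on two observations. First, $S$ induces a complete subgraph: for $x,y\in S$, minimality of $S$ forces $x$ and $y$ to each have a neighbour in $G_a$ and in $G_b$, so there exist shortest $x$--$y$ paths $P_a$ and $P_b$ with interiors in $G_a$ and in $G_b$ respectively; their union is a cycle whose only possible chord is $\{x,y\}$ (shortest paths have no chords, and interior vertices of $P_a$ and of $P_b$ lie in different components of $G\setminus S$), so if $x\neq y$ and $\{x,y\}\notin E(G)$ one gets a cycle of length $\geq 4$ without a chord, contradicting chordality. Second, a simplicial vertex of $A$ lying in $V(G_a)$ is automatically simplicial in $G$, since all of its $G$‑neighbours already lie in $V(G_a)\cup S$. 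Now $A$ and $B$ are proper induced subgraphs of $G$, hence chordal, so the inductive hypothesis applies to each; because $S$ is a clique, $A$ cannot have two non‑adjacent simplicial vertices both inside $S$, so $A$ yields a simplicial vertex $v_A\in V(G_a)$ of $G$ (if $A$ happens to be complete, any vertex of $V(G_a)$ will do), and symmetrically $B$ yields $v_B\in V(G_b)$; since $v_A$ and $v_B$ sit in different components of $G\setminus S$, they are non‑adjacent, which closes the induction.

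The main obstacle is precisely this simplicial‑vertex lemma, and inside it the claim that a minimal separator induces a clique; once that is in hand, the rest is routine manipulation of induced subgraphs, and the whole argument stays within elementary graph theory, as befits the elementary spirit of this note.
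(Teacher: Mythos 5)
Your proof is correct, and it is the standard textbook argument (minimal vertex separators are cliques, hence every non-complete chordal graph has two non-adjacent simplicial vertices, hence every chordal graph has a simplicial vertex, hence a perfect elimination order by induction). The paper does not prove this statement at all: it treats Dirac's theorem as a classical black box and simply cites \cite{Dirac61}, so there is nothing in the paper to compare against. Two small points worth making explicit when you write this up: in the chordality of the cycle $P_a\cup P_b$ you should say that $P_a$ and $P_b$ are chosen as shortest $x$--$y$ paths \emph{within $A$ and $B$ respectively} (so each is an induced path in $G$), and you should note that the interiors of $P_a$ and $P_b$ are nonempty precisely because $\{x,y\}\notin E(G)$ is the hypothesis you are refuting, so the resulting chordless cycle really has length at least $4$. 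Both of these are implicit in what you wrote, and with them the argument is complete.
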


The \textit{complementary graph} $G^c$ of $G$ is the graph with vertex set $V(G^c)=V(G)$ and where $\{x_i,x_j\}$ is an edge of $G^c$ if and only if $\{x_i,x_j\}\notin E(G)$. A graph $G$ is called \textit{cochordal} if and only if $G^c$ is chordal.

\begin{Theorem}\label{Thm:Froberg}
	\textup{(Fr\"oberg).} Let $G$ be a simple finite graph. Then, $I(G)$ has a linear resolution, if and only if, $G$ is cochordal.
\end{Theorem}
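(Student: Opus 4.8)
The plan is to obtain both implications at once from Hochster's formula. Recall that $I(G)$ is the Stanley--Reisner ideal of the independence complex $\Delta(G)$ of $G$ (the simplicial complex whose faces are the independent sets of $G$); for a subset $W\subseteq V(G)$ the induced subcomplex $\Delta(G)|_{W}$ is the independence complex of $G[W]$, which is the same thing as the clique complex of $G^{c}|_{W}$. Hochster's formula gives
$$
\beta_{i,j}\bigl(S/I(G)\bigr)\ =\ \sum_{W\subseteq V(G),\,|W|=j}\dim_{K}\widetilde H_{j-i-1}\bigl(\Delta(G)|_{W};K\bigr),
$$
and, since $I(G)$ is generated in degree $2$, $I(G)$ has a linear resolution if and only if $\reg\bigl(S/I(G)\bigr)=1$, i.e.\ if and only if $\widetilde H_{\ell}(\Delta(G)|_{W};K)=0$ for every $W\subseteq V(G)$ and every $\ell\ge1$. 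Thus it suffices to prove the combinatorial--topological assertion: $G^{c}$ is chordal if and only if the clique complex of every induced subgraph of $G^{c}$ has vanishing reduced homology in all positive degrees.

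For the direction ``chordal $\Rightarrow$ homologically trivial'' I would use Dirac's theorem (Theorem~\ref{Thm:Dirac}). Chordality is inherited by induced subgraphs, so it is enough to show that the clique complex $\mathrm{Cl}(H)$ of a chordal graph $H$ satisfies $\widetilde H_{\ell}(\mathrm{Cl}(H);K)=0$ for $\ell\ge1$, and I would prove this by induction on $|V(H)|$. The disconnected case is handled componentwise; if $H$ is connected with at least two vertices, Theorem~\ref{Thm:Dirac} (applied to the first vertex of a perfect elimination order) produces a simplicial vertex $v$, so that $N_{H}(v)\cup\{v\}$ is a clique and spans a simplex $\sigma$ of $\mathrm{Cl}(H)$. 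Then $\mathrm{Cl}(H)=\mathrm{Cl}(H-v)\cup\sigma$, the intersection $\mathrm{Cl}(H-v)\cap\sigma$ is exactly the simplex on $N_{H}(v)$, and since $\sigma$ and this intersection are both contractible, the Mayer--Vietoris sequence forces $\widetilde H_{\ell}(\mathrm{Cl}(H))\cong\widetilde H_{\ell}(\mathrm{Cl}(H-v))$ for $\ell\ge1$; the inductive hypothesis for the chordal graph $H-v$ then closes the argument.

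For the converse I would argue by contraposition: if $G^{c}$ is not chordal it contains, by definition, an induced cycle $C$ of some length $m\ge4$. For $W=V(C)$ the graph $G^{c}|_{W}$ is this chordless $m$-cycle, which has no triangles, so its clique complex is the $1$-dimensional complex consisting of the cycle alone --- a space homeomorphic to $S^{1}$, whose first reduced homology is nonzero. Hochster's formula then gives $\beta_{m-2,\,m}\bigl(S/I(G)\bigr)\ne0$, hence $\reg\bigl(S/I(G)\bigr)\ge2$ and $I(G)$ has no linear resolution. The point demanding the most care is the Mayer--Vietoris step: one must check that $\mathrm{Cl}(H-v)\cap\sigma$ is really the full simplex on $N_{H}(v)$ --- this is precisely where the simpliciality of $v$, supplied by the perfect elimination order, is used --- and that the reduced Mayer--Vietoris sequence is applied correctly in the degenerate cases (an isolated vertex, a single edge). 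As a by-product, the same simplicial-vertex induction can be sharpened to show that $I(G)$ has linear quotients whenever $G$ is cochordal, which is the form needed later in the paper.
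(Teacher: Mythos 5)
The paper does not give a proof of Theorem~\ref{Thm:Froberg}: it is stated as a classical result with a citation to Fr\"oberg's paper, and the body of the article takes it as a black box, applying it together with Dirac's theorem inside Lemma~\ref{Lem:useful}. So there is no ``paper's own proof'' against which to compare; I will evaluate your argument on its merits.

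Your proof is correct, and it is one of the standard modern proofs of Fr\"oberg's theorem. The Hochster-formula reduction is handled properly: since $I(G)$ is the Stanley--Reisner ideal of the independence complex $\Delta(G)$, and since $I(G)$ is equigenerated in degree $2$, a $2$-linear resolution is equivalent to $\reg(S/I(G))=1$, which via Hochster translates exactly into the vanishing of $\widetilde H_\ell(\Delta(G)|_W;K)$ for all $\ell\ge1$ and all $W$; and $\Delta(G)|_W$ is indeed the clique complex of $G^c[W]$. The forward direction is the delicate part and you set it up correctly: chordality passes to induced subgraphs, the disconnected case is disposed of since reduced homology in degrees $\ge1$ is additive over disjoint unions, and for a connected chordal $H$ with at least two vertices the initial vertex $v$ of a Dirac perfect elimination order is simplicial with nonempty neighbourhood, giving the cover $\mathrm{Cl}(H)=\mathrm{Cl}(H-v)\cup\overline\sigma$ with $\overline\sigma$ the full simplex on $N_H(v)\cup\{v\}$ and $\mathrm{Cl}(H-v)\cap\overline\sigma$ the full (nonempty) simplex on $N_H(v)$. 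Both $\overline\sigma$ and the intersection are contractible, so reduced Mayer--Vietoris collapses $\widetilde H_\ell(\mathrm{Cl}(H))\cong\widetilde H_\ell(\mathrm{Cl}(H-v))$ for every $\ell$, and induction finishes. The converse via an induced $m$-cycle, $m\ge4$, whose clique complex is a circle, is also correct: it yields $\beta_{m-2,m}(S/I(G))\neq0$, hence $\reg(S/I(G))\ge2$. Your closing remark that the same simplicial-vertex induction upgrades to linear quotients (using the perfect elimination order of $G^c$ to order the edges) is true and is essentially the content of Lemma~\ref{Lem:useful} and Corollary~\ref{Cor:useful} in the squarefree case; the paper recovers this via the decomposition $I=x_1P+J$ and polarization rather than via Hochster's formula, which has the advantage of working directly with non-squarefree quadratic monomial ideals without passing through topology. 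Either way, your argument is a complete and clean proof of Fr\"oberg's theorem.
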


Let $I\subset S$ be a monomial ideal. We denote by $\mathcal{G}(I)$ the (unique) minimal monomial generating set of $I$. We say that $I$ has \textit{linear quotients} if there exists an order $u_1>\dots>u_m$ of $\mathcal{G}(I)$, called a \textit{linear quotients order} of $I$, such that the colon ideals $(u_1,\dots,u_{i-1}):u_i$ are generated by variables, for all $1\le i\le m$.

Note that $(u_1,\dots,u_{i-1}):u_i$ is generated by the monomials $u_j:u_i=\lcm(u_j,u_i)/u_i$ where $1\le j<i$. Therefore, $I$ has linear quotients order $u_1>\dots>u_m$, if and only if, for all $j<i$ there exists $k<i$ such that $u_k:u_i=x_b$ is a variable dividing $u_j:u_i$.

It is well-known that if $I\subset S$ is an equigenerated monomial ideal with linear quotients, then $I$ has linear resolution \cite[Proposition 8.2.1]{HH2011}.

We recall the \textit{polarization} technique. For a monomial $u=x_1^{a_1}\cdots x_n^{a_n}\in S$, the \textit{polarization} of $u$ is the monomial $u^\wp=\prod_{i=1}^n(\prod_{j=1}^{a_i}x_{i,j})$ in the polynomial ring $K[x_{i,j}:1\le i\le n,1\le j\le a_i]$. The \textit{polarization} of a monomial ideal $I\subset S$ is defined to be the squarefree ideal $I^\wp$ with minimal generating set $\mathcal{G}(I^\wp)=\{u^\wp:u\in\mathcal{G}(I)\}$, in the polynomial ring $S^\wp$ over $K$ in the variables $x_{i,j}$ required to define $\mathcal{G}(I^\wp)$.

The following well-known property can be found in \cite[Corollary 1.6.3]{HH2011}.

\begin{Lemma}\label{Lem:pol}
	Let $I\subset S$ be a monomial ideal. Then $I$ has linear resolution, if and only if, $I^\wp$ has linear resolution.
\end{Lemma}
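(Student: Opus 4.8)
The plan is to show that polarization leaves all graded Betti numbers unchanged, from which the lemma is immediate. Since polarization does not alter the degrees of the minimal generators, $I$ is generated in a single degree $d$ if and only if $I^\wp$ is; and, recalling that a monomial ideal generated in degree $d$ has a $d$-linear resolution exactly when $\beta^{S}_{i,j}(S/I)=0$ for all $i\ge 1$ and $j\ne i+d-1$, it suffices to prove
\[
\beta^{S^\wp}_{i,j}(S^\wp/I^\wp)\ =\ \beta^{S}_{i,j}(S/I)\qquad\text{for all }i,j.
\]
Now $I^\wp$ is reached from $I$ by finitely many \emph{elementary} polarization steps, each introducing a single new variable and lowering by one the largest exponent with which one old variable occurs among the minimal generators. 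Running one such step backwards, it is enough to establish the following. Let $J\subset R=K[z_1,\dots,z_N]$ be a monomial ideal, let $x$ and $y$ be two of the variables, put $R'=R/(x-y)$ (a polynomial ring in $N-1$ variables) and let $J'\subset R'$ be the image of $J$, that is, the monomial ideal generated by the monomials obtained from $\mathcal{G}(J)$ by the substitution $y\mapsto x$. Then $\beta^{R}_{i,j}(R/J)=\beta^{R'}_{i,j}(R'/J')$ for all $i,j$ --- under the hypotheses, met at every polarization step, that for some integer $c\ge 1$ every $u\in\mathcal{G}(J)$ satisfies $\deg_y u\le 1$ and $\deg_x u\le c$, and $y\mid u$ implies $x^{c}\mid u$.

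Two facts then do the work. First, $\ell:=x-y$ is a nonzerodivisor on $R/J$. As $J$ is a monomial ideal, every associated prime of $R/J$ is of the form $\mathfrak{p}=(J:w)$ for a monomial $w$ and is generated by a set $A$ of variables; so it suffices to show $\{x,y\}\not\subseteq A$. Suppose $x,y\in A$. The ideal $(J:w)$ is generated by the monomials $u/\gcd(u,w)$, $u\in\mathcal{G}(J)$. If each of these were divisible by a variable of $\mathfrak{p}$ different from $y$, then $(J:w)$ would lie in the prime generated by the variables in $A\setminus\{y\}$, which does not contain $y$, contradicting $y\in(J:w)$. Hence some $u/\gcd(u,w)$ is divisible by $y$ and by no other variable of $\mathfrak{p}$; then $\deg_y u=1$, whence $x^{c}\mid u$, and since $x\nmid u/\gcd(u,w)$ we get $\deg_x w\ge\deg_x u\ge c$. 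But $x\in(J:w)$ means $xw\in J$, so $xw$ is divisible by some $u^{*}\in\mathcal{G}(J)$ with $u^{*}\nmid w$, forcing $\deg_x u^{*}=\deg_x w+1\ge c+1$, against $\deg_x u^{*}\le c$. This contradiction shows $\ell$ lies in no associated prime, hence is a nonzerodivisor. Second, $R/(\ell)\cong R'$, and the two-term free resolution $0\to R(-1)\to R\to R/(\ell)\to 0$ given by multiplication by $\ell$ shows, since $\ell$ is a nonzerodivisor on $R/J$, that $\mathrm{Tor}^{R}_{>0}(R/J,R/(\ell))=0$; therefore, tensoring the minimal graded free $R$-resolution $F_\bullet$ of $R/J$ with $R/(\ell)$ yields an acyclic complex of free $R'$-modules resolving $R/J\otimes_R R'=R'/J'$, and it is still minimal because its matrices have entries in the homogeneous maximal ideal. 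Comparing ranks degree by degree gives $\beta^{R}_{i,j}(R/J)=\beta^{R'}_{i,j}(R'/J')$.

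It remains to check that the shape hypotheses persist through the polarization process. Collapsing, for a fixed old variable $x_i$, its polarization variables $x_{i,2},\dots,x_{i,a_i}$ one at a time onto $x_{i,1}$, and then proceeding to $x_{i+1}$, one sees at the step that collapses $x_{i,j}$ onto $x_{i,1}$ that $y=x_{i,j}$ occurs to exponent at most $1$, that $x=x_{i,1}$ occurs to exponent at most $c:=j-1$, and that a generator divisible by $x_{i,j}$ comes from an original generator with $x_i$-exponent $>c$, hence is divisible by $x_{i,1}^{\,c}$; these three conditions are evidently inherited after the collapse, so the induction goes through and Lemma~\ref{Lem:pol} follows. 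The crux --- and the only step requiring care --- is the nonzerodivisor claim: the exponent bookkeeping is routine but must be arranged so that the bounds on $\deg_x$ and $\deg_y$ are exactly compatible with the implication $y\mid u\Rightarrow x^{c}\mid u$. That some compatibility is essential is shown already by $J=(xy,x^{3})\subset K[x,y]$, for which $x-y$ kills the nonzero class of $x^{2}$ in $R/J$ and hence is a zerodivisor.
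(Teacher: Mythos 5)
Your argument is correct. Note, however, that the paper does not actually prove Lemma \ref{Lem:pol} at all --- it simply cites \cite[Corollary 1.6.3]{HH2011} and moves on. What you have written is a complete, self-contained proof of that cited fact, and it follows the standard textbook route (the one Herzog--Hibi themselves use): polarization preserves all graded Betti numbers because the linear forms identifying $x_{i,j}$ with $x_{i,1}$ form a regular sequence on $S^\wp/I^\wp$, and reducing a minimal graded free resolution modulo a regular linear form preserves exactness and minimality. You run the chain in the ``collapsing'' direction (from $I^\wp$ toward $I$), which is equivalent; the exponent bookkeeping ($\deg_y u\le 1$, $\deg_x u\le c$, $y\mid u\Rightarrow x^c\mid u$) is exactly what is needed for the nonzerodivisor claim, and the associated-prime argument you give for that claim is tight. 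The concluding counterexample $J=(xy,x^3)$ is a nice touch showing the compatibility condition cannot be dropped. In short: correct, but more work than the paper asks for --- the paper treats this as a black box, whereas you have reconstituted the standard proof behind the citation.
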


The following technical lemma will be crucial for the proof of Theorem \ref{Thm:HHZ}.
\begin{Lemma}\label{Lem:useful}
	Let $I\subset S$ be a quadratic monomial ideal having linear resolution. Then, up to relabeling, we can write $I=x_1P+J$ with $J\subset P$, where
	\begin{align}
		\label{eq:1}P\ &=\ (u/x_1:\ u\in\mathcal{G}(I)\,\,\text{such that}\,\,x_1\,\,\text{divides}\,\,u),\,\,\,\,\,\,\text{and}\\
		\label{eq:2}J\ &=\ (u\in\mathcal{G}(I):\ x_1\,\,\text{does not divide}\,\,u),
	\end{align}
	and moreover $J$ has again linear resolution.
\end{Lemma}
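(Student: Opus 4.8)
The plan is to reduce to the squarefree case by polarization, and then to read off the required decomposition from a perfect elimination order of a suitable chordal graph. First I set $I^\wp\subset S^\wp$; since $I$ is quadratic, $I^\wp=I(G)$ is the edge ideal of a finite simple graph $G$ whose vertex set consists of $x_i$ (written $x_{i,1}$) for each variable occurring only linearly among the generators of $I$, together with $x_{i,1},x_{i,2}$ for each $i$ with $x_i^2\in\mathcal{G}(I)$. By Lemma~\ref{Lem:pol}, $I^\wp$ has linear resolution, so Fr\"oberg's Theorem~\ref{Thm:Froberg} gives that $G^c$ is chordal, and Dirac's Theorem~\ref{Thm:Dirac} provides a perfect elimination order of $G^c$; let $w$ be its largest vertex, so that $N_{G^c}(w)$ induces a complete subgraph of $G^c$ (i.e.\ $w$ is simplicial in $G^c$). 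I may assume $w=x_{i,1}$ for some $i$: if instead $w=x_{i,2}$, then $x_i^2\in\mathcal{G}(I)$ and $N_G(w)=\{x_{i,1}\}$, whence $N_{G^c}(x_{i,1})\subseteq N_{G^c}(w)$ is again a clique of $G^c$, so $x_{i,1}$ is simplicial as well. Relabelling the variables of $S$, assume $w=x_{1,1}$, and write $V_1$ for the set of vertices of $G$ coming from $x_1$, so $V_1=\{x_{1,1}\}$ when $x_1^2\notin\mathcal{G}(I)$ and $V_1=\{x_{1,1},x_{1,2}\}$ when $x_1^2\in\mathcal{G}(I)$.

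With $P$ and $J$ as in the statement, the identity $I=x_1P+J$ holds for \emph{any} choice of $x_1$, and since $I$ is quadratic $P$ is generated by variables; moreover a direct inspection of $\mathcal{G}(I)$ gives $P=(\overline{z}:z\in N_G(w))$, where $\overline{x_{i,j}}:=x_i$. To prove $J\subseteq P$ I argue by contradiction. Let $u\in\mathcal{G}(J)$, so $x_1\nmid u$ and $u^\wp=yy'$ with $\{y,y'\}\in E(G)$; then $\overline{y},\overline{y'}\neq x_1$, so $y,y'\notin V_1$, and in particular $y,y'\neq w$. If $u\notin P$, then no variable dividing $u$ lies in $\{\overline{z}:z\in N_G(w)\}$, forcing $y\notin N_G(w)$ and $y'\notin N_G(w)$; hence $y,y'\in N_{G^c}(w)$, and simpliciality of $w$ makes $\{y,y'\}$ an edge of $G^c$ --- contradicting $\{y,y'\}\in E(G)$. (When $u=x_a^2$ the same argument applies to $y=x_{a,1}$, $y'=x_{a,2}$.) Thus $J\subseteq P$. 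Finally, no generator of $J$ involves $x_1$, so $J^\wp=I(H)$ with $H=G[V(G)\setminus V_1]$; then $H^c=G^c[V(G)\setminus V_1]$ is an induced subgraph of the chordal graph $G^c$ and hence chordal, so $H$ is cochordal, $J^\wp$ has linear resolution by Fr\"oberg, and $J$ has linear resolution by Lemma~\ref{Lem:pol}.

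I expect the only genuinely delicate point to be the bookkeeping forced by the non-squarefree generators $x_i^2$ of $I$: one must make sure that the simplicial vertex $w$ of $G^c$ really descends to an honest variable $x_1$ of $S$ (this is why the case $w=x_{i,2}$ has to be ruled out), that deleting $x_1$ from the generators of $I$ corresponds precisely to removing the one or two vertices $V_1$ from $G$, and that the passage from ``$w$ simplicial'' to ``$J\subseteq P$'' is verified for square generators $u=x_a^2$ of $J$ as well as for ordinary edges $u=x_ax_b$. Apart from that, the argument is just the expected interplay of polarization with the theorems of Dirac and Fr\"oberg recalled above.
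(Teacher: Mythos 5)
Your proof follows the same route as the paper: polarize, invoke Fr\"oberg and Dirac, take a simplicial vertex $w$ of $G^c$, set $P$ from $N_G(w)$ and $J$ from the edges avoiding $w$, deduce $J\subseteq P$ from simpliciality and linearity of $J$ from heredity of chordality. The one substantive difference is that you explicitly handle the polarization bookkeeping when $I$ has square generators $x_i^2$: if the simplicial vertex of $G^c$ produced by Dirac happens to be a dummy variable $x_{i,2}$, you observe that $N_{G^c}(x_{i,1})\subseteq N_{G^c}(x_{i,2})$ so $x_{i,1}$ is simplicial as well, and you may therefore assume $w$ is an honest first polarization variable $x_{1,1}$. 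The paper's proof quietly conflates the PEO labelling of $G^c$ with a relabelling of the variables of $S$ and never addresses this case, so your argument actually closes a small gap rather than merely reproducing the written proof; aside from that, the two arguments are the same, with your appeal to ``induced subgraphs of chordal graphs are chordal'' playing the role of the paper's observation that $x_2>\dots>x_n$ is still a PEO of $(G\setminus\{x_1\})^c$.
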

\begin{proof}
	By Lemma \ref{Lem:pol}, there exists a finite simple graph $G$ for which $I^\wp=I(G)$ is an edge ideal with linear resolution. Then, by Theorem \ref{Thm:Froberg}, $G$ is a cochordal graph. Fix $x_1>x_2>\dots>x_n$ a perfect elimination order of $G^c$. Let $P'=(x_j:x_j\in N_G(x_1))$ and $J'=I(G\setminus\{x_1\})$, where $G\setminus\{x_1\}$ is the graph obtained from $G$ by removing $x_1$ from $V(G)$ and all edges incident with $x_1$ from $E(G)$. Since $x_2>\dots>x_n$ is again a perfect elimination order of $(G\setminus\{x_1\})^c$, it follows by Theorem \ref{Thm:Dirac} that $G\setminus\{x_1\}$ is cochordal and by Theorem \ref{Thm:Froberg} that $J'$ has linear resolution. We have
	$$
	E(G)\ =\ \{\{x_1,x_j\}:\ x_j\in N_G(x_1)\}\cup E(G\setminus\{x_1\}).
	$$
	So $I(G)=x_1P'+J'$. We claim that $J'\subset P'$. Let $x_ix_j\in J'$ be a monomial generator corresponding to an edge $\{x_i,x_j\}\in E(G\setminus\{x_1\})\subset E(G)$. We must show that either $x_i$ or $x_j$ belongs to $N_G(x_1)$. If this was not the case, then $\{x_1,x_i\},\{x_1,x_j\}\in E(G^c)$. Since $x_1>x_2>\dots>x_n$ is a perfect elimination order of $G^c$, it would follow that $\{x_i,x_j\}\in E(G^c)$, against the fact that $\{x_i,x_j\}\in E(G)$. Hence $J'\subset P'$.
	
	Now, let $P$ and $J$ as defined in the equations (\ref{eq:1}) and (\ref{eq:2}). It is then clear that $I=x_1P+J$, $P^\wp=P'$ and $J^\wp=J'$. Hence $J\subset P$, because $J'\subset P'$. Finally, applying Lemma \ref{Lem:pol}, $J$ has a linear resolution, because $J'=J^\wp$ has linear resolution.
\end{proof}

For the proof of the next result we recall some concepts. Let $u=x_1^{a_1}\cdots x_n^{a_n}$ and $v=x_1^{b_1}\cdots x_n^{b_n}$. The \textit{lex order} $<_{\lex}$ is the monomial order of $S$ defined by setting $u>v$ if $a_j=b_j$ for all $j<i$ and $a_i>b_i$. The \textit{support} of a monomial ideal $I\subset S$ is defined as the set $\supp\,I=\bigcup_{u\in\mathcal{G}(I)}\supp(u)$, where $\supp(w)=\{x_i:\ x_i\,\,\text{divides}\,\,w\}$ for any monomial $w\in S$. 

\begin{Corollary}\label{Cor:useful}
	Let $I\subset S$ be a quadratic monomial ideal having linear resolution. Then, up to relabeling, the following two properties hold.
	\begin{enumerate}
		\item[$(*)$] If $i<j<k$ and $x_jx_k\in I$, then $x_ix_j\in I$ or $x_ix_k\in I$.\smallskip
		\item[$(**)$] If $x_i^2\in I$ and $x_jx_k\in I$ for some $j<i$  and some $k$, then $x_ix_j\in I$ or $x_ix_k\in I$.
	\end{enumerate}
    In particular, up to relabeling, $I$ has linear quotients with respect to the lex order of its minimal generators.
\end{Corollary}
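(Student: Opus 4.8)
The plan is to prove, by induction on the number $n$ of variables occurring in $I$, the slightly stronger assertion that a \emph{single} relabeling makes $(*)$, $(**)$ and the linear quotients property hold at once. After discarding variables not in $\supp I$ we may assume $\supp I=\{x_1,\dots,x_n\}$; for $n\le 1$ all three statements are trivial, so suppose $n\ge 2$. The engine is Lemma \ref{Lem:useful}: relabel so that $I=x_1P+J$ with $J\subseteq P$, where $P=(x_j:x_1x_j\in\mathcal{G}(I))$ is generated by variables, $J$ is generated by those $u\in\mathcal{G}(I)$ not divisible by $x_1$, and $J$ has linear resolution. Since $x_1\notin\supp J$, the ideal $J$ involves fewer than $n$ variables, so the inductive hypothesis provides a relabeling of $\supp J$ that works for $J$; I would extend it to a relabeling of $\supp I$ by keeping $x_1$ first, relabelling $\supp J$ accordingly, and giving the largest remaining labels, in any order, to the variables of $\supp I\setminus(\supp J\cup\{x_1\})$, namely those occurring only in products $x_1x_j$.

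To verify $(*)$ and $(**)$, note that every degree-$2$ monomial of $I$ is a minimal generator, and such a generator lies in $\mathcal{G}(J)$ precisely when $x_1$ does not divide it; hence $x_jx_k\in\mathcal{G}(I)$ with $j,k\ge 2$ forces $j,k\in\supp J$, and $x_i^2\in I$ with $i\ge 2$ forces $i\in\supp J$. For $(*)$: if $i<j<k$ and $x_jx_k\in I$ then $2\le j<k$, so $j,k\in\supp J$; when $i=1$, the relation $x_jx_k\in J\subseteq P$ gives $x_1x_j\in\mathcal{G}(I)$ or $x_1x_k\in\mathcal{G}(I)$, and when $i\ge 2$ the index $i$, being smaller than $j\in\supp J$, lies in $\supp J$, so $(*)$ for $J$ applies. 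For $(**)$: if the squared variable is $x_1$ the hypothesis is empty; for $i\ge 2$, $x_i^2\in J\subseteq P$ gives $x_1x_i\in\mathcal{G}(I)$, which settles the subcases where $1\in\{j,k\}$, while $(**)$ for $J$ settles the subcase $j,k\ge 2$.

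For the linear quotients property I would verify, by the same induction, that the lex order $u_1>_{\lex}\cdots>_{\lex}u_m$ of $\mathcal{G}(I)$ (for the relabeled variables $x_1>\cdots>x_n$) is a linear quotients order. If $u=x_1x_b$, its lex-predecessors are precisely the $x_1x_{b'}\in\mathcal{G}(I)$ with $b'<b$, and $(x_1x_{b'}):u=x_{b'}$, so the colon ideal of $u$ by its predecessors is generated by variables. If $u=x_ax_b$ with $2\le a\le b$, then $u\in J\subseteq P$ gives $x_1x_a\in\mathcal{G}(I)$ or $x_1x_b\in\mathcal{G}(I)$, a lex-predecessor of $u$ whose colon against $u$ equals $x_1$; since every lex-predecessor of $u$ divisible by $x_1$ contributes a colon divisible by $x_1$, those predecessors add nothing beyond $x_1$ to the colon ideal, and the contribution of the remaining predecessors --- the $J$-generators lex-larger than $u$ --- equals $(v\in\mathcal{G}(J):v>_{\lex}u):u$, which is generated by variables by the inductive hypothesis applied to $J$.

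The main obstacle is this last step. One cannot deduce the linear quotients statement formally from $(*)$ and $(**)$: the ideal $I=(x_2^2,x_1x_3,x_2x_3)$ has linear resolution and satisfies both $(*)$ and $(**)$ with respect to the order $x_1>x_2>x_3$, yet $(x_1x_3):x_2^2=(x_1x_3)$ is not generated by variables, so that labeling is not a linear quotients order. Hence the linear quotients property must be read off directly from the recursion of Lemma \ref{Lem:useful} --- equivalently, from the stronger fact, also delivered by the induction, that $x_a^2\in I$ and $c<a$ force $x_cx_a\in I$ --- rather than formally from $(*)$ and $(**)$. A minor but genuine bookkeeping point, needed for $(*)$, is that the variables occurring only together with $x_1$ must receive the largest labels, lest one of them sit between the two indices of a generator and spoil $(*)$.
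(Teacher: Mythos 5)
Your proposal is correct and follows essentially the same route as the paper's own proof: induction on $|\supp I|$ via the decomposition $I=x_1P+J$ of Lemma~\ref{Lem:useful}, verification of $(*)$ and $(**)$ by splitting into the cases $i=1$ (use $J\subseteq P$) and $i\ge 2$ (use the inductive hypothesis on $J$), and verification of linear quotients by observing that $\mathcal{G}(x_1P)$ forms the lex-initial block and, for $e_r\in\mathcal{G}(J)$, the colon ideal equals $(x_1)+(\text{lex-predecessors in }J):e_r$, the latter handled by induction. That said, two of your side remarks are genuine and worth noting. First, you correctly point out that the relabeling inherited from $J$ must be extended to $\supp I$ by assigning the \emph{largest} labels to the ``orphan'' variables of $\supp I\setminus(\supp J\cup\{x_1\})$; otherwise $(*)$ can fail, and the paper leaves this bookkeeping implicit. (The paper in fact secures it via the perfect elimination order of $G^c$ in Lemma~\ref{Lem:useful}: if $x_i$ is an orphan, its neighbourhood in $G^c$ beyond $x_1$ is everything, so a simplicial $x_i$ forces the later vertices to form a clique of $G^c$ and hence no generator $x_jx_k$ with $j,k>i$; but this is never spelled out, and your explicit ``orphans last'' convention is the cleaner way to run the induction.) Second, your example $I=(x_2^2,x_1x_3,x_2x_3)$ correctly shows that $(*)$ and $(**)$ for a given labeling do not by themselves imply that lex is a linear quotients order for that labeling, so the ``in particular'' in the Corollary is not a formal deduction --- the linear quotients claim genuinely requires the direct argument from the decomposition $I=x_1P+J$, exactly as you (and the paper) give it.
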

\begin{proof}
	We proceed by induction on $|\supp\,I|$. If $|\supp\,I|=1$, there is nothing to prove. Let $|\supp\,I|>1$. By Lemma \ref{Lem:useful}, $I=x_1P+J$ where $P$ and $J$ are as in equations (\ref{eq:1}) and (\ref{eq:2}), $J\subset P$ and $J$ has linear resolution. Since $|\supp\,J|<|\supp\,I|$, by the inductive hypothesis $J$ satisfies the properties $(*)$ and $(**)$.
	
	\textit{Proof of} $(*)$. Let $i<j<k$ such that $x_jx_k\in I$. If $i>1$, then the property $(*)$ holds for $J$, and so it holds for $I$, too. Suppose now $i=1$. Then $x_jx_k\in J\subset P$. Thus $x_j\in P$ or $x_k\in P$, and so $x_ix_j\in I$ or $x_ix_k\in I$, as desired.
	
	\textit{Proof of} $(**)$. Let $i<j$ and $k$ such that $x_i^2\in I$ and $x_jx_k\in I$. Since $i>1$, we have $x_i^2\in J\subset P$, and so $x_i\in P$. If $j,k>1$, then the property $(**)$ holds for $J$, and so it holds for $I$ too. Otherwise, if $j=1$ or $k=1$, then $x_jx_i\in x_1P\subset I$ or $x_kx_i\in x_1P\subset I$, as desired.
	
	Finally, let $\mathcal{G}(I)=\{e_1,\dots,e_m\}$ be ordered such that $e_1>_{\lex}\cdots>_{\lex}e_m$. We claim that $e_1>\dots>e_m$ is a linear quotients order of $I$. If $|\supp\,I|=1$, there is nothing to prove. Let $|\supp\,I|>1$, and $I=x_1P+J$ as in the beginning of the proof. Then $\{e_1,\dots,e_s\}=\mathcal{G}(x_1P)$ and $\{e_{s+1},\dots,e_m\}=\mathcal{G}(J)$ for some $s$. It is clear that $(e_1,\dots,e_{r-1}):(e_r)$ is generated by variables for $r=2,\dots,s$. Now, let $r>s$. Since $x_1$ does not divide $e_{r}$ and $e_r\in P$, we obtain that
	$$
	(e_1,\dots,e_{r-1}):(e_r)=(x_1P,e_{s+1},\dots,e_{r-1}):(e_r)=(x_1)+(e_{s+1},\dots,e_{r-1}):(e_r).
	$$
	Since $|\supp\,J|<|\supp\,I|$, by induction $e_{s+1}>\dots>e_m$ is a linear quotients order of $J$. Hence, $(e_1,\dots,e_{r-1}):e_r$ is indeed generated by variables.
\end{proof}

\section{The new proof}

Let $I\subset S$ be a quadratic monomial ideal having linear resolution. After a suitable relabeling of the variables, Lemma \ref{Lem:useful} and Corollary \ref{Cor:useful} guarantee that $I$ satisfies the properties $(*)$ and $(**)$.

Let $<_\lex$ be the lex order on $S$ induced by $x_1>x_2>\dots>x_n$, and let $\mathcal{G}(I)=\{e_1<_\lex\dots<_\lex e_m\}$ ordered decreasingly according to $<_{\lex}$.

Let $y_1,\dots,y_m$ be a new set of variables, and let $\varphi:K[y_1,\dots,y_m]\rightarrow S$ be the $K$-linear map defined by setting $\varphi(y_i)=e_i$ for $i=1,\dots,m$. On the variables $y_1,\dots,y_m$ we consider the lex order induced by $y_1>\dots>y_m$.

Let $u\in\mathcal{G}(I^k)$ be a monomial of degree $2k$. Then $u=e_{i_1}\cdots e_{i_k}$ for certain integers $i_1\le\dots\le i_k$. Such a presentation is not unique in general. Following ideas given in \cite[Section 2]{HH2005}, we say that $u=e_{i_1}\cdots e_{i_k}$ is the \textit{standard presentation} of $u$ if $y_{i_1}\cdots y_{i_k}$ is the smallest monomial, with respect to $<$, such that $u=\varphi(y_{i_1}\cdots y_{i_k})$.

Fix $k\ge1$. On the set $\mathcal{G}(I^k)$ we consider the following order. Let $u,v\in\mathcal{G}(I^k)$, and let $u=e_{i_1}\cdots e_{i_k}$, $v=e_{j_1}\cdots e_{j_k}$ be the standard presentations of $u$ and $v$. We put $v>u$ if $y_{i_1}\cdots y_{i_k}<y_{j_1}\cdots y_{j_k}$ with respect to the lex order induced by $y_1>\dots>y_m$. We will show that $I^k$ has linear quotients with respect to this order for all $k\ge1$.

We are now ready to deliver the new elementary proof of Theorem \ref{Thm:HHZ}.

\begin{proof}[Proof of Theorem \ref{Thm:HHZ}]
	(c) $\Rightarrow$ (b) $\Rightarrow$ (a) is true for any equigenerated monomial ideal.\smallskip
	
	To prove (a) $\Rightarrow$ (c), we proceed by induction on $k\ge1$ and show that $I^k$ has linear quotients with respect to the order introduced above.
	
	Let $k=1$. Then, it follows from Corollary \ref{Cor:useful} that $I$ has linear quotients order $e_m>_{\lex}e_{m-1}>_{\lex}\cdots>_{\lex}e_1$. Since each monomial $e_i$ is its standard presentation and $y_m<y_{m-1}<\dots<y_1$, the base case of the induction is verified.
	
	Now, let $k>1$, and let $u=e_{i_1}\cdots e_{i_k}$ and $v=e_{j_1}\cdots e_{j_k}$ be the standard presentations of two monomials $u,v\in\mathcal{G}(I^k)$ with $v<u$. Our job is to find a monomial $w\in\mathcal{G}(I^k)$ with $w<u$ such that $w:v$ is a variable that divides $v:u$.
	
	If $\deg(v:u)=1$, there is nothing to prove. Suppose now $\deg(v:u)>1$.
	
	We may assume that $i_r\ne j_s$ for all $1\le s,r\le k$. Indeed, suppose $i_r=j_s$ for some $r$ and $s$. Set $u'=u/e_{i_r}$ and $v'=v/e_{j_s}$. Then $v':u'=v:u$. We claim that $u'=e_{i_1}\cdots e_{i_{r-1}}e_{i_{r+1}}\cdots e_{i_k}$ and $v'=e_{j_1}\cdots e_{j_{s-1}}e_{j_{s+1}}\cdots e_{j_k}$ are again standard presentations. Suppose this was not the case for $u'$. Then, if $u'=e_{\ell_1}\cdots e_{\ell_{k-1}}$ is the standard presentation of $u'$, we have $y_{\ell_1}\cdots y_{\ell_{k-1}}<y_{i_1}\cdots y_{i_{r-1}}y_{i_{r+1}}\cdots y_{i_k}$. Multiplying by $y_{i_r}$ we obtain $y_{\ell_1}\cdots y_{\ell_{k-1}}y_{i_r}<y_{i_1}\cdots y_{i_k}$ and $\varphi(y_{\ell_1}\cdots y_{\ell_{k-1}}y_{i_r})=u$, against the fact that $u=e_{i_1}\cdots e_{i_k}$ is the standard presentation of $u$. We can proceed similarly for $v'$. Hence $v'<u'$. By induction on $k$, there exists $w'\in\mathcal{G}(I^{k-1})$ such that $w':u'$ is a variable that divides $v':u'$. Set $w=w'e_{i_r}$ and let $w'=e_{t_1}\cdots e_{t_{k-1}}$ and $w=e_{p_1}\cdots e_{p_k}$ be the standard presentations of $w'$ of $w$. We have $w<u$, because $y_{p_1}\cdots y_{p_k}\le y_{t_{1}}\cdots y_{t_{k-1}}y_{i_r}<y_{i_1}\cdots y_{i_k}$. Moreover, $w:u=w':u'$ is a variable that divides $v:u=v':u'$, as desired.
	
	Let $e_{i_1}$ be the monomial appearing in $u$ for which $y_{i_1}$ is the biggest variable appearing in $y_{i_1}\cdots y_{i_k}$. By our assumption $i_1\ne j_s$ for all $s$. We can write
	$$
	fu=gv,\quad\quad\textup{where}\quad f=v:u\quad\textup{and}\quad g=u:v.
	$$
	
	Since $\deg(u)=\deg(v)=2k$, we have $\deg(f)=\deg(g)=d\ge2$. Let $x_{n+1}$ be a ``dummy" variable. Set $f'=fx_{n+1}^d$ and $g'=gx_{n+1}^d$. Then $f'u=g'v$. We claim that there is a sequence of $2(k+d)$ indices $v_{1},\dots,v_{2(k+d)}$ with $v_{2(k+d)+1}=v_1$ such that
	\begin{enumerate}
		\item[(i)] $x_{v_1}x_{v_2}=e_{i_1}$ with $v_1\le v_2$,
		\item[(ii)] $f'u=\prod_{\ell=1}^{k+d}(x_{v_{2\ell-1}}x_{v_{2\ell}})$ and $\,g'v=\prod_{\ell=1}^{k+d}(x_{v_{2\ell}}x_{v_{2\ell+1}})$,
		\item[(iii)] if both $v_{\ell},v_{\ell+1}\ne n+1$, then
		$$
		x_{v_{\ell}}x_{v_{\ell+1}}\in\begin{cases}
			\,\{e_{i_1},\dots,e_{i_k}\},&\textup{if}\ \ell\ \textup{is odd},\\
			\,\{e_{j_1},\dots,e_{j_k}\},&\textup{if}\ \ell\ \textup{is even}.
		\end{cases}
		$$
	\end{enumerate}
    \textit{Proof of the Claim.} The trick which we use and which is well-known to the experts, is to associate to the relation $f'u=g'v$ a so-called \textit{even closed walk} of a certain graph, see also \cite[Lemma 10.1.4]{HH2011}. Let $e_{i_1}=x_{v_1}x_{v_2}$ with $v_1\le v_2$. Since $x_{v_2}$ divides $f'u=g'v$, then $x_{v_2}$ divides $g'$ or $v$. If $x_{v_2}$ divides $g'$, we can write $g'v=(x_{v_2}x_{n+1})h$ where $h$ is a suitable monomial of degree $2(k+d-1)$. Since $x_{n+1}$ divides $f'$ we can find $x_{v_3}$ dividing $f$ and then $x_{v_3}$ divides $h$. Otherwise, $x_{v_2}$ divides $v$, so it divides $e_{j_s}$ for some $s$, and $e_{j_s}=x_{v_2}x_{v_3}$. Iterating this reasoning, it is clear that we can find the desired sequence of indices satisfying the properties (i)-(ii)-(iii). $\square$\smallskip

    Let $\ell_1<\ell_2<\cdots$ be the integers $3\le\ell\le 2(k+d)$ for which $v_\ell=n+1$. There are at least two such integers since $\deg(f)\ge2$. We distinguish the possible cases.\smallskip
    
    \textsc{Case 1.} Suppose $\ell_1$ is even. Let $u''=u/[(x_{v_1}x_{v_2})\cdots(x_{v_{\ell_1-3}}x_{v_{\ell_1-2}})]$ and set
    \begin{equation}\label{eq:pres}
    	w=(x_{v_2}x_{v_3})\cdots(x_{v_{\ell_1-2}}x_{v_{\ell_1-1}})u''.
    \end{equation}
    Then $w\in\mathcal{G}(I^k)$ and $w:u=x_{v_{\ell_1-1}}$. Since $e_{i_1}$ appears in the standard presentation of $u$, but not in that of $v$, by the property (iii) we have $e_{i_1}\ne (x_{v_2}x_{v_3}),\dots,(x_{v_{\ell_1-2}}x_{v_{\ell_1-1}})$. Let $e_{p_1}\cdots e_{p_k}$ be the standard presentation of $w$, and let $e_{t_1}\cdots e_{t_k}$ be the presentation given in (\ref{eq:pres}). Then,
    $y_{p_1}\cdots y_{p_k}\le y_{t_1}\cdots y_{t_k}<y_{i_1}\cdots y_{i_k}$ because the $y_{i_1}$-degree of $y_{i_1}\cdots y_{i_k}$ is strictly bigger than the $y_{i_1}$-degree of $y_{t_1}\cdots y_{t_k}$. We conclude that $w<u$. Since $x_{v_{\ell_1-1}}=w:u$ divides $f=v:u$, we are done in this case.\smallskip
    
    \textsc{Case 2.} Suppose both $\ell_1$ and $\ell_2$ are odd. Then, we consider the monomials $u'=u/[(x_{v_{\ell_1+2}}x_{v_{\ell_1+3}})\cdots(x_{v_{\ell_2-2}}x_{v_{\ell_2-1}})]$ and $v'=v/[(x_{v_{\ell_1+1}}x_{v_{\ell_1+2}})\cdots(x_{v_{\ell_2-3}}x_{v_{\ell_2-2}})]$. Notice that $\ell_2-\ell_1\ge4$. Otherwise, $\ell_2=\ell_1+2$ and then $x_{\ell_1+1}$ divides both $f$ and $g$, which is not possible since $\gcd(f,g)=\gcd(v:u,u:v)=1$. Therefore $\ell_2-\ell_1\ge4$, and so $\deg(u')=\deg(v')<k$. Since $e_{i_1}$ divides $u'$ again but not $v'$ and $u'$ is in standard presentation, it follows that $v'<u'$. Thus by inductive hypothesis, there exists $w'\in\mathcal{G}(I^s)$ where $s=\deg(u')/2$ with $w'<u'$ such that $w':u'$ is a variable that divides $v':u'=f/x_{v_{\ell_1+1}}$. Set $w=w'(x_{v_{\ell_1+2}}x_{v_{\ell_1+3}})\cdots(x_{v_{\ell_2-2}}x_{v_{\ell_2-1}})$. Then $w\in\mathcal{G}(I^k)$, $w<u$ and $w:u=w':u'$ is a variable that divides $f=v:u$.\smallskip
    
    \textsc{Case 3.} Suppose $\ell_1$ is odd and $\ell_2$ is even. Then, we consider the monomials $u'=u/[(x_{v_3}x_{v_4})\cdots(x_{v_{\ell_1-2}}x_{v_{\ell_1-1}})]$ and $v'=v/[(x_{v_2}x_{v_3})\cdots(x_{v_{\ell_1-3}}x_{v_{\ell_1-2}})]$. Notice that $v'<u'$. If $\deg(u')=\deg(v')<2k$, by induction there exists $w'\in\mathcal{G}(I^s)$ with $s=\deg(u')/2$ and $w'<u'$ such that $w':u'$ is variable that divides $v':u'=f$. Setting $w=w'(x_{v_3}x_{v_4})\cdots(x_{v_{\ell_1-2}}x_{v_{\ell_1-1}})$, we have $w\in\mathcal{G}(I^k)$ and $w<u$. Moreover, $w:u=w':u'$ is a variable that divides $f=v:u$, as desired.
    
    Suppose now that $u'=u$ and $v'=v$. Hence $\ell_1=3$ and $x_{v_4}$ divides $f$. We may furthermore assume that for none of the integers $4\le p\le\ell_2-1$ we have $v_p=v_1$ or $v_p=v_2$. Indeed, assume, for instance, that $v_p=v_1$ for some $4\le p\le\ell_2-1$. Suppose that $p$ is odd. Consider the monomial
    $$
    w=(x_{v_4}x_{v_5})\cdots (x_{v_{p-1}}x_{v_p})(x_{v_{p+1}}x_{v_1})[u/(x_{v_1}x_{v_2})(x_{v_5}x_{v_6})\cdots(x_{v_p}x_{v_{p+1}})]\in\mathcal{G}(I^k).
    $$
    We claim that $w<u$. Indeed, suppose that $v_{p+1}\ne v_2$. Then $e_{i_1}$ appears in the standard presentation of $u$ but not in the above presentation of $w$. This implies that $w<u$ in this case. Suppose now that $v_{p+1}=v_2$. Then $e_{i_1}^2$ appears in the standard presentation of $u$, but $e_{i_1}$ appears in degree one in the above presentation of $w$. This implies again that $w<u$. It is clear that $w:u=x_{v_4}$ divides $f=v:u$. We can proceed similarly if $v_p=v_1$ for $p$ even, or if $v_p=v_2$ for some $4\le p\le\ell_2-1$.
    
    Summarizing our argument thus far, we may assume that $\ell_1=3$ and that for all integers $4\le p\le\ell_2-1$ we have $v_p\ne v_1$ and $v_p\ne v_2$.\smallskip
    
    \textsc{Subcase 3.1.} Suppose there is an integer $4\le p<\ell_2-1$ such that $v_p\ne v_{p+1}$.
    
    \textsc{Subcase 3.1.1.} Let $v_1\ne v_2$. Assume that $x_{v_1}x_{v_{p}}\in I$ or $x_{v_1}x_{v_{p+1}}\in I$. For instance, say that $x_{v_1}x_{v_p}\in I$. Then, we consider the following monomial of $\mathcal{G}(I^k)$,
    \begin{equation}\label{eq:w}
    	w=(x_{v_4}x_{v_5})\cdots(x_{v_p}x_{v_1})\cdots(x_{v_{\ell_2-2}}x_{v_{\ell_2-1}})[u/(x_{v_1}x_{v_2})(x_{v_5}x_{v_6})\cdots(x_{v_{\ell_2-3}}x_{v_{\ell_2-2}})].
    \end{equation}
    Since $e_{i_1}$ appears in the standard presentation of $u$ but not in the above presentation of $w$, we have $w<u$. Moreover, $w:u=x_{v_4}$ divides $f$, as desired. We can proceed similarly if $x_{v_2}x_{v_{p+1}}\in I$ for some $4\le p<\ell_2-1$.
    
    Suppose now that $x_{v_1}x_{v_p},x_{v_1}x_{v_{p+1}}\notin I$. Since $x_{v_p}x_{v_{p+1}}\in I$, by the property $(*)$ it follows that either $v_p<v_1$ or $v_{p+1}<v_1$. Let $v_p<v_1$. Since $v_p<v_1<v_2$, $x_{v_1}x_{v_2}\in I$ and $x_{v_1}x_{v_p}\notin I$, again by the property $(*)$ we obtain that $x_{v_2}x_{v_p}\in I$. Then, we can consider the following monomial of $\mathcal{G}(I^k)$,
    $$
    w=(x_{v_4}x_{v_5})\cdots(x_{v_p}x_{v_2})\cdots(x_{v_{\ell_2-2}}x_{v_{\ell_2-1}})[u/(x_{v_1}x_{v_2})(x_{v_5}x_{v_6})\cdots(x_{v_{\ell_2-3}}x_{v_{\ell_2-2}})].
    $$
    As before, $w<u$ and $w:u=x_{p_4}$ divides $f=v:u$, as desired.\smallskip
    
    \textsc{Subcase 3.1.2.} Now, let $v_1=v_2$. Our assumption on $e_{i_1}$ ensures that $v_1\ge v_{p}$. Since $x_{v_p}x_{v_{p+1}}\in I$, the property $(**)$ implies that either $x_{v_1}x_{v_p}\in I$ or $x_{v_1}x_{v_{p+1}}\in I$. If $x_{v_1}x_{v_p}\in I$, we can consider again the monomial given in (\ref{eq:w}). We have $w<v$ and $w:v=x_{v_4}$ divides $f$. We can proceed similarly if $x_{v_1}x_{v_{p+1}}\in I$.\smallskip
    
    \textsc{Subcase 3.2.} Suppose now that $v_{p}=v_{p+1}$ for all $4\le p<\ell_2-1$. Since we assumed that $e_{i_r}\ne e_{j_s}$ for all $r$ and $s$, we conclude that $\ell_2=6$ and $v_4=v_5$. In each of the cases $v_4<v_1<v_2$, $v_1<v_4<v_2$ and $v_4<v_1=v_2$, by using either $(*)$ or $(**)$, we have either $x_{v_1}x_{v_4}\in I$ or $x_{v_2}x_{v_4}\in I$. To conclude the proof it is enough to consider the monomial $w=(x_{v_i}x_{v_4})[u/(x_{v_1}x_{v_2})]\in\mathcal{G}(I^k)$, where $i=1$ if $x_{v_1}x_{v_4}\in I$, and $i=2$ if $x_{v_2}x_{v_4}\in I$. We have $w<u$ and $w:u=x_{v_4}$ divides $f$.
\end{proof}

Basser \textit{et. al.} also found another proof of Theorem \ref{Thm:HHZ} \cite[Corollary 3.11]{BDMS}.\medskip

We conclude the paper with the following consequence and some questions.

\begin{Corollary}\label{Cor:PI}
	Let $I\subset S$ be a quadratic monomial ideal with linear resolution and let $P\subset S$ be a monomial prime ideal containing $I$. Then $P^kI^\ell$ has linear quotients for all $k,\ell\ge1$.
\end{Corollary}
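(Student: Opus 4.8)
The plan is to deduce Corollary \ref{Cor:PI} from Theorem \ref{Thm:HHZ} by adjoining a new variable. Write $P=(x_i:i\in W)$ with $W\subseteq\{1,\dots,n\}$; since $I\subseteq P$ and $P$ is generated by variables, $W$ is a vertex cover of the graph underlying $I$ --- that is, every quadratic generator of $I$ has a variable among $\{x_i:i\in W\}$, and $x_i^2\in I$ forces $i\in W$. Adjoin a new variable $z$ and put $L=zP+I\subseteq S[z]$, a quadratic monomial ideal equigenerated in degree $2$. For every $n\ge 1$ one has $L^n=\sum_{j=0}^{n}z^jP^jI^{\,n-j}$, each summand $z^jP^jI^{\,n-j}$ being equigenerated in degree $2n$; and since a generator of $L^n$ has $z$-degree exactly $j$ precisely when it lies in $z^jP^jI^{\,n-j}$, the minimal generators of $L^n$ split as the disjoint union of the $\mathcal G(z^jP^jI^{\,n-j})$. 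Hence, with $n=k+\ell$, it suffices to prove: (i) $L$ has a linear resolution, so that by Theorem \ref{Thm:HHZ} (the proof of (a)$\Rightarrow$(c)) the ideal $L^{k+\ell}$ has linear quotients with respect to the standard-presentation order of Section 3; and (ii) that order, restricted to each ``$z$-strand'' $z^jP^jI^{\,k+\ell-j}$, is still a linear quotients order. Granting both, $z^kP^kI^\ell$ has linear quotients, and since multiplying all generators of an ideal by the fixed monomial $z^k$ leaves every colon ideal unchanged, so does $P^kI^\ell$.

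For (i) I would pass to polarizations (Lemma \ref{Lem:pol}). Polarizing $I$ yields an edge ideal $I^\wp=I(H)$ with $H$ cochordal (Theorem \ref{Thm:Froberg}), and $L^\wp$ is the edge ideal of the graph $\widetilde H$ obtained from $H$ by adjoining one vertex $z$ joined to $\{x_i:i\in W\}$. In the complement, $\widetilde H^c$ is $H^c$ with $z$ joined exactly to the vertices \emph{outside} $W$; any two such vertices, being non-adjacent in $H$ (as $W$ is a cover), are adjacent in $H^c$, so the neighbourhood of $z$ in $\widetilde H^c$ induces a clique. Therefore $z$ can be placed first in a perfect elimination order of $\widetilde H^c$, whence $\widetilde H^c$ is chordal (Theorem \ref{Thm:Dirac}), $L^\wp=I(\widetilde H)$ has a linear resolution (Theorem \ref{Thm:Froberg}), and so does $L$ (Lemma \ref{Lem:pol}). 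The same observation shows that in the relabelling supplied by Lemma \ref{Lem:useful} and Corollary \ref{Cor:useful} for $L$ one may take $z=x_1$, the largest variable; consequently the generators $zx_i$ are the lexicographically largest generators of $L$ and correspond to the smallest $y$-variables in Section 3.

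For (ii), fix $j$. When $j=k+\ell$ we have $z^{k+\ell}P^{k+\ell}I^0=z^{k+\ell}P^{k+\ell}$, which has linear quotients because $P^{k+\ell}$ is a power of an ideal generated by a subset of the variables; so assume $j<k+\ell$, and let $u,v\in\mathcal G(z^jP^jI^{\,k+\ell-j})$. Then $\deg_z u=\deg_z v=j$, hence $\deg_z\lcm(u,v)=j$ and the colon monomials $v:u$ and $u:v$ are $z$-free; in particular $f$ and $g$ from the proof of Theorem \ref{Thm:HHZ} are $z$-free, so no edge of the even closed walk attached to $f'u=g'v$ that meets the dummy variable involves $z$, and $z$ occurs in exactly $j$ walk-edges on the odd side and $j$ on the even side. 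Also, since the standard presentation of $u$ contains at least one factor from $\mathcal G(I)$ (its $z$-degree is $j<k+\ell$), whose $y$-variable exceeds those of the $zx_i$-factors, the distinguished generator $e_{i_1}=x_{v_1}x_{v_2}$ is an $I$-generator, hence $z$-free, so neither $v_1$ nor $v_2$ is the index of $z$. Running through Cases 1--3 and Subcases 3.1.1, 3.1.2, 3.2 of that proof, the witness $w$ is in each case obtained from $u$ by deleting a $z$-free variable ($x_{v_1}$ or $x_{v_2}$) and adjoining a $z$-free variable (an endpoint of a dummy edge, or $v_1$ or $v_2$), or else by adjoining to $u$ some of its own walk-edges a witness $w'$ for a lower power which, by the strengthened inductive hypothesis, already lies in the appropriate $z$-strand; in all cases $\deg_z w=j$. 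Thus $w\in\mathcal G(z^jP^jI^{\,k+\ell-j})$ and $w:u$ is a variable dividing $v:u$, as required.

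The main obstacle is exactly the bookkeeping in (ii): verifying that the witness never trades a $z$ for a non-$z$ variable, i.e.\ always stays in its $z$-strand. The underlying mechanism is uniform --- every $z$-containing edge of the even closed walk joins $z$ to some $x_i$ with $i\in W$, no dummy edge touches $z$, the $z$-edges match up one-to-one between the odd and even sides, and each rerouting replaces a walk-segment between two non-$z$ endpoints by another such segment --- but it must be checked against each of the five or six cases of the proof of Theorem \ref{Thm:HHZ}. One could package this as a lemma: \emph{if $u$ and $v$ belong to the same $z$-strand of $L^n$, then so does the witness furnished by the proof of Theorem \ref{Thm:HHZ}.}
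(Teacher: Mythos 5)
Your reduction to $L^{k+\ell}=J^{k+\ell}$ with $J=zP+I$ is the same as the paper's, and your proof that $J$ is again a quadratic ideal with linear resolution (via polarization and a perfect elimination order of the complement) is a perfectly good alternative to the paper's one-line Betti-splitting argument $x_0P\cap I=x_0I$; both use $I\subseteq P$ at this stage. The divergence, and the gap, is in step (ii).

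You try to show that the linear-quotients witness $w$ produced by the proof of Theorem \ref{Thm:HHZ} automatically stays in the same $z$-strand as $u$ and $v$, by re-running all the cases of that proof with $z$-degree bookkeeping. You flag this yourself as ``the main obstacle,'' and indeed it is not carried out; worse, it is not clear it is even true as stated. All one knows a priori is that $w:u$ is a variable dividing $v:u$, hence $z$-free, which only gives $\deg_z w\le k$, not equality (the deleted variable $u:w$ could in principle be $z$). The paper does not try to prove strand-preservation at all. Instead it uses a short correction: if the witness $w=x_0^i w_0 w_1$ has $i<k$, use $I\subseteq P$ to write each $I$-factor $e_j=x_{r_j}x_{s_j}$ of $w_1$ with $x_{s_j}\in P$, and replace $k-i$ of these $I$-factors by the $x_0P$-factors $x_0x_{s_j}$, producing $w'=x_0^{k-i}w_0'w_1'$. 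One then checks directly from the definition of the order that $w'<w<u$ and that $w':u$ still equals $x_q$, so $w'$ is a witness lying in the correct strand. This sidesteps the entire case analysis you were dreading, and it is also exactly where the hypothesis $I\subseteq P$ enters a second time; note that your step (ii), as sketched, never uses it, which should be a warning sign given the Conca--Herzog example and the open question recorded at the end of the paper. To complete your proof along the paper's lines, replace the proposed strand-preservation lemma by this swap argument.
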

\begin{proof}
	We introduce the ``dummy" variable $x_0$, and up to a suitable extension, we may assume that $S=K[x_0,x_1,\dots,x_n]$ and that $\supp\,P\cup\supp\,I\subseteq\{x_1,\dots,x_n\}$. Let $J=x_0P+I$. This is a rather trivial example of a Betti splitting \cite{FHT}. Indeed, $J=x_0P+I$ is an $x_0$-splitting (in the sense of \cite{FHT}) because $x_0P$ and $I$ have 2-linear resolution. Since $x_0P\cap I=x_0(P\cap I)=x_0I$ has a $3$-linear resolution, it follows from \cite[Proposition 1.8]{CF} that $J$ is a quadratic monomial ideal having a $2$-linear resolution. By the proof of Theorem \ref{Thm:HHZ}, $J^k$ has linear quotients with respect to the order $<$ described in the beginning of the section.
	
	We now show that $P^kI^\ell$ has linear quotients for all $k,\ell\ge1$. It is equivalent to show that $x_0^kP^kI^\ell$ has linear quotients. Let $u,v\in\mathcal{G}(x_0^kP^kI^\ell)\subset\mathcal{G}(J^{k+\ell})$ with $v<u$ with respect to the order $<$. Then, there exists $w\in\mathcal{G}(J^{k+\ell})$ with $w<u$ such that $w:u=x_q$ for some $q$ and $x_q$ divides $v:u$. Since the $x_0$-degree of $u$ and $v$ is $k$, it follows that $x_q\ne x_0$ and the $x_0$-degree of $w$ is less or equal to $k$. Since $\mathcal{G}(J^{k+\ell})$ is the disjoint union $\bigsqcup_{i=0}^{k+\ell}\mathcal{G}(x_0^iP^iJ^{k+\ell-i})$, it follows that $w=x_0^iw_0w_1$ with $i\le k$, $w_0\in\mathcal{G}(P^i)$ and $w_1\in\mathcal{G}(I^{k+\ell-i})$. If $i=k$, then $w\in\mathcal{G}(x_0^kP^kI^\ell)$, as desired. Suppose now that $i<k$. By assumption $I\subset P$. Write $w_1=e_1\cdots e_{k+\ell-i}$ with $e_j=x_{r_j}x_{s_j}\in I$ and $x_{s_j}\in P$ for all $j$. Then, we consider the monomial $w'=x_0^{k-i}w_0'w_1'$ with $w_0'=w_0x_{s_1}\cdots x_{s_{k-i}}\in\mathcal{G}(P^k)$ and $w_1'=w_1/(e_{1}\cdots e_{k-i})\in\mathcal{G}(I^\ell)$. It follows from the definition of $<$ that $w'<w$. Moreover, $w':u$ divides $w:u=x_q$. Hence $w':u=x_q$ too, and since $w'\in\mathcal{G}(x_0^kP^kI^\ell)$, this concludes the proof.
\end{proof}

This result is no longer valid if $I$ is not a quadratic monomial ideal with linear resolution, as the following example \cite[Example 4.3]{CH} of Conca and Herzog shows. Let $S=K[a,b,c,d]$, $I=(a^2b,abc,bcd,cd^2)$ and $P=(b,c)$. Then $I$ has linear quotients and $I\subset P$, but $PI$ does not have linear quotients, not even linear resolution.

On the other hand, very recently it was shown in \cite[Lemma 3.3]{FMR} that $PI$ has linear quotients for any edge ideal $I$ with linear resolution and any monomial prime ideal $P$, independently from the assumption that $I\subset P$. Hence, we are left to ask whether the assumption that $I\subset P$ is really needed in Corollary \ref{Cor:PI}. This raises the following question.
\begin{Question}
	Let $I\subset S$ be a quadratic monomial ideal with linear resolution, and let $P\subset S$ be a monomial prime ideal. Is it true that $P^kI^\ell$ has linear quotients for all $k,\ell\ge1$ ?
\end{Question}

\textbf{Acknowledgment.} The author was partly supported by the Grant JDC2023-051705-I funded by MICIU/AEI/10.13039/501100011033 and by the FSE+. Moreover, the author is grateful to Than Vu and Dang Hop Nguyen for some comments on an earlier draft of the manuscript and to Somayeh Moradi for her encouragement to complete this project.


\begin{thebibliography}{99}

\bibitem{AB} A. Banerjee, \textit{The regularity of powers of edge ideals}, J. Algebraic Combin. {\bf41}(2015), no. 2, pp. 303--321.

\bibitem{BDMS} E. Basser, R. Diethorn, R. Miranda, M. Stinson-Maas, \textit{Powers of Edge Ideals with Linear Quotients}, 2024, preprint \url{https://arxiv.org/abs/2412.03468}

\bibitem{CH} A. Conca, J. Herzog. \textit{Castelnuovo--Mumford regularity of products of ideals}. Collect. Math., {\bf 54}, 137--152 (2003).

\bibitem{CF} M. Crupi, A. Ficarra, \textit{Linear resolutions of t-spread lexsegment ideals via Betti splittings}, Journal of Algebra and Its Applications, doi 10.1142/S0219498824500725

\bibitem{Dirac61} G.A. Dirac, \textit{On rigid circuit graphs}, Abh. Math. Sem. Univ. Hamburg, {\bf 38} (1961), 71--76.

\bibitem{FMR} A. Ficarra, S. Moradi, T. R\"omer, \textit{Componentwise linear symbolic powers of edge ideals and Minh's conjecture}, (2024), preprint \url{https://arxiv.org/abs/2411.11537}.

\bibitem{FHT} C. A. Francisco, H. T. Ha, A. Van Tuyl, \textit{Splittings of monomial ideals}, Proc. Amer. Math. Soc., {\bf 137} (10) (2009), 3271-3282.

\bibitem{Froberg88} R. Fr\"oberg, \textit{On Stanley-Reisner rings}, Topics in algebra, Part 2 (Warsaw, 1988), 57--70, Banach Center Publ., 26, Part 2, PWN, Warsaw, 1990.

\bibitem{HH2011} J. Herzog, T. Hibi, \emph{Monomial ideals}, Graduate texts in Mathematics {\bf 260}, Springer, 2011.

\bibitem{HH2005} J. Herzog, T. Hibi, \textit{The depth of powers of an ideal}, J. Algebra 291 (2005), no. 2, 534--550.

\bibitem{HHZ2004} J. Herzog, T. Hibi and X. Zheng, \textit{Monomial ideals whose powers have a linear resolution},	Math. Scand. (2004), 23--32.

\bibitem{NP} E. Nevo, I. Peeva, \textit{$C_4$-free edge ideals}, J. Algebr. Combin. 37, 243–248 (2013).

\end{thebibliography}
\end{document}